\theoremstyle{indented}
\newtheorem{theorem}{Theorem}
\newtheorem{lemma}{Lemma}
\theoremstyle{definition} 
\newtheorem{defn}{Definition}
\theoremstyle{remark} 
\newtheorem*{remark}{Remark}
\DeclareMathOperator{\NN}{\mathbb{N}}
\DeclareMathOperator{\RR}{\mathbb{R}}
\DeclareMathOperator{\TT}{\mathbb{T}}
\DeclareMathOperator{\ra}{\rightarrow}
\DeclareMathOperator*{\esssup}{ess\,sup}
\DeclareMathOperator*{\essinf}{ess\,inf}
\newcommand{\lfrf}{\lfloor m^{\alpha} \rfloor}
\newcommand{\lcrc}{\lceil m^{\alpha} \rceil}
\title{A version of Marstrand's theorem on a discrete metric space}
\author{Leonid Gorbunov}
\address{Department of Mathematics and Computer Science, St. Petersburg State University; 199178, 14th line 29, Vasilyevsky Island, St. Petersburg, Russia}
\email{leon.tyumen@icloud.com}
\thanks{Supported by the Russian Science Foundation grant 24-71-10011.}
\begin{document}  

\begin{abstract}
We present and prove the version of Marstrand's theorem for discrete metric space. We provide explicit estimates of the quotient of upper and lower densities of measures on this space.
\end{abstract}

\clearpage\maketitle
\thispagestyle{empty}

\section{introduction}
The study of local behavior of measures is an important theme in geometric measure theory. Let $(X, d)$ be a metric space and let $\mu$ be a Radon measure on $X$. We denote the closed ball with center $x$ and radius $r$ by $B(x, r)$
\begin{defn}
Define the upper and lower $\alpha$-densities of $\mu$ at $x \in X$:
        \begin{equation}\label{densities}
            \begin{aligned}
            \Theta^{\alpha, *}(\mu, x) = \limsup_{r \ra 0_+} \frac{\mu \left ( B_r(x)\right )}{r^{\alpha}}; \\ 
            \Theta_{\alpha, *}(\mu, x) = \liminf_{r \ra 0_+} \frac{\mu \left ( B_r(x)\right )}{r^{\alpha}}.
            \end{aligned}
        \end{equation}
    If they agree, their value is called the $\alpha$-\textit{density} of $\mu$ at $x$ and is denoted by $\Theta^{\alpha}(\mu, x)$.    
\end{defn}
\begin{theorem}[Marstrand~\cite{Marstrand}, 1964]
Let $\mu$ be a Radon measure on $\RR^d$, let $A$ be a set of positive measure, and let $\alpha \in [0, d]$ be a real number. Suppose that for $\mu$-a.e. $x \in A$ the $\alpha$-density of $\mu$ at $x$ exists, is finite and positive. Then $\alpha$ is integer.
\end{theorem}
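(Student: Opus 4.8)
The plan is to convert the global, a.e.\ statement about densities into a rigid pointwise structure by blowing up at a typical point, and then to read off the integrality of $\alpha$ from the algebraic identities that the resulting limit object must satisfy.

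First I would uniformize the hypothesis. The map $x \mapsto \Theta^{\alpha}(\mu, x)$ is measurable and, by assumption, finite and positive for $\mu$-a.e.\ $x \in A$. After discarding a null set I may assume it takes values in a fixed interval $[c_1, c_2]$ with $0 < c_1 \le c_2 < \infty$ on a subset of positive measure; applying Egorov's theorem to the functions $r \mapsto \mu(B_r(x))/r^{\alpha}$ along a sequence $r \ra 0_+$ then yields a compact set $K \subseteq A$ with $\mu(K) > 0$ on which the convergence in \eqref{densities} is uniform. Consequently there is a scale $r_0 > 0$ and a two-sided regularity estimate $\tfrac{1}{2} c_1 \, r^{\alpha} \le \mu(B_r(x)) \le 2 c_2 \, r^{\alpha}$ for all $x \in K$ and all $0 < r \le r_0$, so that $\mu$ is comparable to an $\alpha$-regular measure along $K$.

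Next I would blow up at a generic point. Fix $x_0 \in K$ and consider the rescaled measures $\mu_{x_0, r}(E) = r^{-\alpha}\, \mu(x_0 + rE)$. The two-sided estimate makes $\{\mu_{x_0, r}\}_{0 < r \le r_0}$ uniformly bounded on every fixed ball, so by weak-$\ast$ compactness of Radon measures some sequence $r_j \ra 0_+$ produces a nonzero limit $\nu$. The decisive gain from assuming the \emph{exact} density (rather than only the upper and lower densities) is that $\nu$ must be \emph{$\alpha$-uniform}, meaning $\nu(B_{\rho}(y)) = \theta\, \rho^{\alpha}$ for all $\rho > 0$ and all $y \in \operatorname{supp}\nu$, where $\theta = \Theta^{\alpha}(\mu, x_0)$: at the center $y = 0$ this is immediate from $\mu(B_{\rho r}(x_0))/(\rho r)^{\alpha} \ra \theta$, and at an arbitrary support point it follows by combining the $\mu$-a.e.\ existence of the density with the stability of tangent measures under further blow-ups.

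The heart of the matter is then to show that an $\alpha$-uniform measure on $\RR^d$ forces $\alpha \in \{0, 1, \dots, d\}$, and this I expect to be the main obstacle. The key device is the Gaussian transform $\Phi_s(y) = \int e^{-s|z - y|^2}\, d\nu(z)$. Combining the layer-cake formula with $\nu(B_{\rho}(y)) = \theta \rho^{\alpha}$ and the elementary integral $\int_0^1 (-\log t)^{\alpha/2}\, dt = \Gamma\!\left(\tfrac{\alpha}{2} + 1\right)$ gives the clean identity
\begin{equation*}
\Phi_s(y) = \theta\, \Gamma\!\left(\tfrac{\alpha}{2} + 1\right) s^{-\alpha/2} \qquad \text{for every } y \in \operatorname{supp}\nu \text{ and every } s > 0 .
\end{equation*}
Thus $\Phi_s$ is constant on $\operatorname{supp}\nu$, whereas as a function on all of $\RR^d$ it is real-analytic, and its radial moments $\int |z - y|^{2k} e^{-s|z-y|^2}\, d\nu(z) = \theta\, \Gamma\!\left(\tfrac{\alpha}{2}+1\right)\, \tfrac{\alpha}{2}\left(\tfrac{\alpha}{2}+1\right)\cdots\left(\tfrac{\alpha}{2}+k-1\right)\, s^{-\alpha/2 - k}$ are obtained by differentiating the identity in $s$. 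Forcing these analytic data to be consistent with the finite-order behaviour of $\Phi_s$ near a support point produces arithmetic relations on $\alpha/2$ that fail unless $\alpha$ is an integer; making this rigorous — essentially ruling out non-integer exponents by a global analytic-continuation argument rather than a purely local expansion — is the delicate step, and is where the genuine content of Marstrand's theorem resides.
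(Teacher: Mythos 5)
First, a framing remark: the paper does not prove this statement at all --- it is quoted from Marstrand's 1964 paper as background motivation, and the paper's own proofs concern the discrete analogues (Theorems~\ref{mainlower} and~\ref{mainupper}) on $\TT$. So your proposal has to be judged against the known proofs in the literature (Marstrand's original argument, or the tangent-measure proof presented by Preiss and in Mattila's book, combined with the Kirchheim--Preiss Gaussian-transform analysis of uniform measures). Measured against those, your reduction steps are a correct outline of the modern route: Egorov-type uniformization to a compact $K$ with two-sided regularity, blow-up along a subsequence to a nonzero tangent measure $\nu$, and the assertion that at $\mu$-a.e.\ point every tangent measure is $\alpha$-uniform. (That last assertion is itself a nontrivial lemma --- it requires working at a $\mu$-density point of $K$ and using approximate continuity of $x \mapsto \Theta^{\alpha}(\mu,x)$ so that support points of $\nu$ are reached by blow-ups of points of $K$ with nearly the same density; you only gesture at this, but it is standard and citable.)

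The genuine gap is the final step. You have reduced Marstrand's theorem to the claim that no $\alpha$-uniform measure on $\RR^d$ exists for non-integer $\alpha$ --- which, as this paper itself notes in Section 5, \emph{is} Marstrand's theorem (``Marstrand proved in~\cite{Marstrand} that there is no $\alpha$-uniform measure for non-integer $\alpha$'') --- and then you do not prove that claim. Your Gaussian identity $\Phi_s(y) = \theta\,\Gamma(\alpha/2+1)\,s^{-\alpha/2}$ on $\operatorname{supp}\nu$ is correct, and so are the moment identities obtained by differentiating in $s$; but these identities are internally consistent for \emph{every} $\alpha>0$, so no ``arithmetic relation on $\alpha/2$'' fails at a single support point, contrary to what your last paragraph suggests. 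In the Kirchheim--Preiss approach the integrality comes from genuinely global geometry: the analyticity of $y \mapsto \Phi_s(y)$ together with its constancy on $\operatorname{supp}\nu$ exhibits the support as an analytic variety; an analytic variety has integer dimension $k$ near its regular points; and a measure that is $\alpha$-uniform and supported on a $k$-dimensional manifold piece forces $\alpha = k$. None of this is carried out (nor is Marstrand's original, more geometric alternative), and you explicitly defer it as ``the delicate step.'' Since that step is precisely the content of the theorem, what you have is an accurate reduction plus a pointer to the right tool, not a proof.
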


Moreover, there is a strengthening of Marstrand's theorem proved by Preiss in~\cite{Preiss}.
\begin{theorem}[Preiss, 1987]\label{common}
 Let $\alpha$ be a real positive non-integer number. There is $\varepsilon = \varepsilon(\alpha, d)$ such that 
 \begin{equation*}
     \mu \left(\left\{x \, \Big  | \,  0 < \Theta^{\alpha, *}(\mu, x) < (1+\varepsilon) \Theta_{\alpha, *}(\mu, x) < \infty \right\} \right) = 0
 \end{equation*}
 for every Borel measure $\mu$ on $\mathbb{R}^d$.
\end{theorem}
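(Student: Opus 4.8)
The plan is to argue by contradiction along the lines of Preiss's method of \emph{tangent measures}. Fix a non-integer $\alpha$ and a dimension $d$, and suppose no admissible $\varepsilon$ exists: then for each $k$ there is a Borel measure $\mu_k$ for which the set
\[
E_k \;=\; \Big\{\, x : 0 < \Theta^{\alpha,*}(\mu_k,x) < \big(1+\tfrac1k\big)\,\Theta_{\alpha,*}(\mu_k,x) < \infty \,\Big\}
\]
has positive measure. After restricting $\mu_k$ to a bounded subset of $E_k$ on which both densities lie between two fixed positive constants (using the Besicovitch differentiation theory for Radon measures together with a standard Egorov-type selection), I may assume the density ratio is pinched below $1+\tfrac1k$ on a set of full measure, with Ahlfors-type bounds $c_0 r^\alpha \le \mu_k(B_r(x)) \le C_0 r^\alpha$ available at the relevant scales.

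Next I would blow up. For $x$ and $r>0$ set $T_{x,r}(y)=(y-x)/r$ and consider the renormalized rescalings $\mu_{x,r} = r^{-\alpha}(T_{x,r})_{\sharp}\mu_k$. Weak-$*$ compactness of Radon measures, together with the uniform mass bounds on $E_k$, guarantees that the tangent set $\operatorname{Tan}(\mu_k,x)$ of weak-$*$ limits along $r_i\to 0$ is nonempty and consists of nonzero measures for almost every $x\in E_k$. The density pinching is inherited at the center: for $\nu\in\operatorname{Tan}(\mu_k,x)$ one gets $\nu(B_\rho(0))\in[c\rho^\alpha,(1+\tfrac1k)c\rho^\alpha]$, and using Preiss's locality (tangent measures of tangent measures are again tangent measures) the same near-uniformity propagates to almost every point of the support, so that
\[
\frac{c}{1+\tfrac1k}\, r^\alpha \;\le\; \nu\!\left(B_r(y)\right) \;\le\; \big(1+\tfrac1k\big)\,c\, r^\alpha
\qquad (y\in\operatorname{supp}\nu,\ r>0).
\]
Normalizing $\nu(B_1(0))=1$ and passing to a further weak-$*$ limit as $k\to\infty$ produces a single \emph{exactly $\alpha$-uniform} measure $\nu_\infty$ on $\RR^d$, i.e.\ $\nu_\infty(B_r(y))=r^\alpha$ for all $y\in\operatorname{supp}\nu_\infty$ and all $r>0$.

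The final step is the rigidity of exact uniform measures. For any $a\in\operatorname{supp}\nu_\infty$ and $s>0$ one has the Gaussian identity
\[
\int e^{-s|y-a|^2}\,d\nu_\infty(y) \;=\; 2s\int_0^\infty \nu_\infty\!\left(B_t(a)\right)\, t\, e^{-st^2}\,dt \;=\; \Gamma\!\left(\tfrac{\alpha}{2}+1\right) s^{-\alpha/2}.
\]
The left-hand side is real-analytic in $a\in\RR^d$ (a Gaussian convolved with $\nu_\infty$), yet it is constant in $a$ along the support; comparing the Taylor expansions of the two sides in powers of $s$ yields a hierarchy of moment identities for $\nu_\infty$. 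Preiss's structural analysis of these identities forces $\operatorname{supp}\nu_\infty$ to carry an integer-dimensional (flat or conical) structure, so that a nonzero $\alpha$-uniform measure can exist only for $\alpha\in\NN$. This contradicts the non-integrality of $\alpha$ and establishes the theorem, the value $\varepsilon=\varepsilon(\alpha,d)$ arising from the compactness-and-contradiction scheme.

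The main obstacle, by a wide margin, is this last step: the classification of $\alpha$-uniform measures and the proof that non-integer $\alpha$ is impossible. The difficulty is genuine because uniform measures need not be flat — the $3$-uniform measure supported on the cone $\{x_4^2=x_1^2+x_2^2+x_3^2\}$ in $\RR^4$ shows that one cannot hope to conclude ``support is a subspace'' (here $\alpha=3$ is an integer, consistent with the theorem, but it rules out any naive argument). Extracting the integer dimension from the moment hierarchy, and handling the two-layer blow-up that turns the near-uniform tangent measures into a genuinely uniform limit, is where Preiss's machinery of tangent-measures-of-tangent-measures is indispensable and where essentially all of the depth of the statement resides.
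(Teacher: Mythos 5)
The paper offers no proof of this statement: it is quoted as background and attributed to Preiss's 1987 Annals paper, so there is no internal argument to compare yours against. Judged on its own terms, your outline is the standard compactness-and-blow-up proof and its skeleton is sound: negate the conclusion to obtain measures $\mu_k$ with pinching $1+\frac1k$ on sets of positive measure, blow up at a generic point (using the locality theorem that tangent measures of tangent measures are tangent measures, which is what propagates the two-sided density bounds from the center to every point of the support), normalize, let $k\to\infty$, and contradict the non-existence of $\alpha$-uniform measures for non-integer $\alpha$. Two caveats are worth recording. First, your final step is heavier than necessary and is where your account of ``where the depth resides'' goes slightly astray: once a nonzero exactly $\alpha$-uniform measure $\nu_\infty$ is produced, its $\alpha$-density exists and is positive and finite at every point of its support, so Marstrand's 1964 theorem --- the other theorem quoted in this paper --- already forces $\alpha\in\NN$; the Gaussian moment hierarchy and the flat-versus-conical classification of uniform measures are not needed, only non-existence for non-integer exponents, and invoking ``Preiss's structural analysis'' as a black box inside a proof of Preiss's own theorem is the one place your sketch risks circularity. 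Second, the steps you compress into single clauses are genuine (if standard) lemmas: that restricting $\mu_k$ to a subset does not change densities at almost every point of the retained set, and that the two-sided bounds survive the weak-$*$ limit in $k$ --- the upper bound via lower semicontinuity on open sets, the lower bound via upper semicontinuity on compact sets, together with the fact that every point of $\operatorname{supp}\nu_\infty$ is a limit of points of $\operatorname{supp}\nu_k$. With those ingredients cited properly, your proposal is a correct outline of the known proof; it is simply not, and could not be expected to be, self-contained.
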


The proofs of these theorems heavily use properties of the space $\RR^d$ as their important part. Unfortunately, there is almost no information on the behavior of $\varepsilon(\alpha, d)$. The open problem of its estimation is one of many in regarding densities defined above. Other unsolved problems could be found in the paper~\cite{Lellis} of C. de Lellis. More information about densities of measures could be found either in the mentioned papers~\cite{Lellis} and ~\cite{Preiss} or in the book~\cite{Mattila} of P. Mattila.

The theorems proved in this paper are quite similar to that of Marstrand and Preiss. However, we will be working with the space of infinite sequences $\{0, 1, \dots, m-1\}^{\NN}$, instead of $\RR^d$.

I wish to thank Rami Ayoush for attracting my attention to this problem, to Pertti Mattila for consultation and to Dmitriy Stolyarov for academic advising, reading the paper and improving the presentation.

\section{Preliminaries}

Given an integer $m \geqslant 2$, let us define the set $\TT$ of infinite sequences of integers from the interval $[0, m-1]$ as follows: 

\begin{equation*}
    \TT = \{0, 1, \dots , m-1\}^{\NN}.
\end{equation*}

To simplify notation now and hereafter, we will denote sequences using only one symbol, and call them \textit{points} or \textit{numbers}. For example, $a=(a_1, a_2 \dots)$. Define functions $d, n \colon \TT \times \TT \ra \RR$ as follows: $n(a, b)$ is the first difference position of the sequences $a=(a_1,a_2, \dots )$, $b=(b_1,b_2, \dots)$, and $d(a, b) = m^{-n(a, b)+1}$. Note that $(\TT, d)$ is a metric space.

\begin{defn}
 Let $n$ be a positive integer. Denote $B_n(x):= B(x, m^{-n})$. The balls $B_n(x)$ are called \textit{intervals} or \textit{$m$-adic intervals} of level $n$. In particular, the space $\TT$ itself is the interval of level $0$.
\end{defn}

\begin{remark} \label{seq}
    It is clear that $B_n(x)$ is the set of sequences that match $x$ in the first $n$ positions.
\end{remark}

\begin{remark} 
    In the case of $\TT$ the definition~\eqref{densities} of $\alpha$-densities may be restated:
        \begin{equation}
            \begin{aligned}
            \Theta^{\alpha, *}(\mu, x) = \limsup_{n \in \NN, \, n \ra \infty} m^{\alpha n} \mu \left ( B_n(x)\right ); \\ 
            \Theta_{\alpha, *}(\mu, x) = \liminf_{n \in \NN, \, n \ra \infty}  m^{\alpha n} \mu \left ( B_n(x)\right ).
            \end{aligned}
        \end{equation}
\end{remark}

Now and henceforth all measures are assumed to be nonzero, finite and defined on the Borel $\sigma$-field of $\TT$.

\section{Main results}
 Assume that the upper $\alpha$-densities are finite and the lower $\alpha$-densities are positive $\mu$-a.e. Then we can define the values
    \begin{equation*}\label{cloc}
        \mathcal{C}_{loc}(\mu, \alpha) = \esssup_{x \in \TT} \frac{\Theta^{\alpha, *}(\mu, x)}{\Theta_{\alpha, *}(\mu, x)},
    \end{equation*}
and 
    \begin{equation*}
        \mathcal{C}(\mu, \alpha) = \frac{\esssup_{x \in \TT} \Theta^{\alpha, *}(\mu, x)}{\essinf_{x \in \TT}\Theta_{\alpha, *}(\mu, x)},
    \end{equation*}
if the numerator of the fraction is finite and the denominator of the fraction is positive. It follows from the definition that
    \begin{equation}\label{basic_lower}
        \mathcal{C}(\mu, \alpha) \geqslant  \mathcal{C}_{loc}(\mu, \alpha) \geqslant 1.
    \end{equation} 
What can we say about these values? The author does not know the sharp estimate. However, Theorems~\ref{mainlower} and~\ref{mainupper} provide a partial answer.
We denote the largest integer that is less than or equal to a real number $x$ by $\lfloor x \rfloor$, and the smallest integer greater than or equal to $x$ by $\lceil x \rceil$.
\begin{theorem}\label{mainlower}
    For any integer $m \geqslant 2$, any real $\alpha \in (0, \, 1)$ and any Radon measure $\mu$ on $\TT$ the following inequality holds:
    \begin{equation}
        \mathcal{C}_{loc}(\mu, \alpha) \geqslant \max_{i, j \in \NN} \left( \frac{m^{\alpha i}}{\lfloor m^{\alpha i} \rfloor}, \frac{\lceil m^{\alpha j} \rceil}{m^{\alpha j}}\right).
    \end{equation}
\end{theorem}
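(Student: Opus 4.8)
\emph{Set-up and strategy.} The plan is to prove the two families of lower bounds, $\mathcal{C}_{loc}(\mu,\alpha)\ge m^{\alpha i}/\lfloor m^{\alpha i}\rfloor$ and $\mathcal{C}_{loc}(\mu,\alpha)\ge \lceil m^{\alpha j}\rceil/m^{\alpha j}$, separately for each fixed $i$ and $j$; the assertion then follows on taking the maximum. Write $f_n(x)=m^{\alpha n}\mu(B_n(x))$, so that $\Theta^{\alpha,*}(\mu,x)=\limsup_n f_n(x)$ and $\Theta_{\alpha,*}(\mu,x)=\liminf_n f_n(x)$. The only identity used repeatedly is the mass-splitting relation: since $B_n(x)$ is partitioned into its $m^i$ subintervals of level $n+i$, additivity of $\mu$ gives
\begin{equation*}
  \sum_{c} f_{n+i}(c) = m^{\alpha i}\, f_n(x),
\end{equation*}
the sum running over those $m^i$ children $c$. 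Abbreviate $S=m^{\alpha i}$ and $N=\lfloor S\rfloor$; if $S\in\NN$ the bound is trivial by~\eqref{basic_lower}, so assume $N<S<N+1$.

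\emph{Reduction to a band.} I argue by contradiction, assuming $\mathcal{C}_{loc}(\mu,\alpha)<S/N$, so there is $R<S/N$ with $\Theta^{\alpha,*}(\mu,x)\le R\,\Theta_{\alpha,*}(\mu,x)$ for $\mu$-a.e.\ $x$. Slicing the range of $\Theta_{\alpha,*}(\mu,\cdot)$ into multiplicative layers $[\theta\rho^{k},\theta\rho^{k+1})$ with $\rho>1$ close to $1$, I select one layer of positive measure; on it $\Theta_{\alpha,*}\in[a_0,a_0\rho)$, hence $\Theta^{\alpha,*}<Ra_0\rho$. By Egorov's theorem I then pass to a subset $E$ with $\mu(E)>0$ and a level $n_0$ such that
\begin{equation*}
  a\le f_n(x)\le b \qquad (x\in E,\ n\ge n_0),
\end{equation*}
with $a=(1-\delta)a_0$ and $b=(1+\delta)Ra_0\rho$; choosing $\rho,\delta$ close to $1,0$ forces $b/a<S/N$. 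I also use that $\mu$-a.e.\ point of $E$ is a density point, $\mu(E\cap B_n(x))/\mu(B_n(x))\to1$, which is the martingale convergence theorem for the filtration by level-$n$ intervals.

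\emph{The floor bound.} Fix a density point $x_0\in E$. Since $\liminf_n f_n(x_0)=\Theta_{\alpha,*}(\mu,x_0)<a_0\rho$, there are infinitely many levels $n\ge n_0$ at which $f_n(x_0)\le a_0\rho(1+\delta')$; for such $n$ large enough one also has $\mu(E\cap B_n(x_0))\ge(1-\eta)\mu(B_n(x_0))$ with $\eta$ small. Put $v=f_n(x_0)$ and $p_c=\mu(c)/\mu(B_n(x_0))$, so that $f_{n+i}(c)=Sv\,p_c$. Any child $c$ meeting $E$ has $f_{n+i}(c)\in[a,b]$, in particular $f_{n+i}(c)\ge a$; as these values sum to $Sv$, the number of children with $f_{n+i}(c)\ge a$ is at most $Sv/a$. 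Since $v$ is essentially $a_0$ and $a=(1-\delta)a_0$, the quantity $Sv/a$ lies just below $S(1+\delta')/(1-\delta)$, which is $<N+1$ once the parameters approach their limits; being an integer, the count is therefore $\le N$. Each such child satisfies $p_c=f_{n+i}(c)/(Sv)\le b/(Sa)$, so the children meeting $E$ carry a mass fraction at most $Nb/(Sa)=(N/S)(b/a)<1$, contradicting $\sum_{c\text{ meets }E}p_c\ge 1-\eta\to1$.

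\emph{The ceiling bound and the main difficulty.} The inequality $\mathcal{C}_{loc}\ge\lceil S\rceil/S=(N+1)/S$ is obtained by the mirror argument, now evaluating at levels where $f_n(x_0)$ is close to its limsup, i.e.\ near the top $b$ of the band. There each child $c$ meeting $E$ has $f_{n+i}(c)\le b$ while $v$ is close to $b$, so $p_c=f_{n+i}(c)/(Sv)$ is at most about $1/S$; accounting for a mass fraction tending to $1$ then requires at least $N+1$ such children, whose values sum to at least $(N+1)a$, exceeding the available total $Sv\le Sb$ as soon as $b/a<(N+1)/S$ — a contradiction. I expect the main obstacle to be precisely the coordination in the floor case: producing a single scale $n$ that is simultaneously a high-density scale and a near-extremal (near-liminf, respectively near-limsup) scale, together with the uniformization via Egorov's theorem that yields the genuine strict inequality for $b/a$. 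The integrality step — that at a scale where the value sits at the bottom of the band at most $\lfloor S\rfloor$ children can remain inside the band — is where the floor function enters and is the heart of the sharp constant; the remainder is bookkeeping with Egorov's theorem and the density theorem.
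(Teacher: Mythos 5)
Your floor half is correct, and it takes a genuinely different route from the paper. The paper first reduces everything to $i=j=1$ by passing from $m$-adic to $m^{d}$-adic intervals, and then combines a pigeonhole lemma (inside every charged interval some child has ratio $f_{n+1}/f_n$ either in $[\tau, m^{\alpha}/\lceil m^{\alpha}\rceil]$ or in $[m^{\alpha}/\lfloor m^{\alpha}\rfloor-\delta, m^{\alpha})$) with a lemma ensuring that $\mu$-a.e.\ point sees such marked children infinitely often, concluding via a dichotomy on which type occurs infinitely often. You instead fix $i$, build a multiplicative band $[a,b]$ with $b/a<S/N$ by slicing the range of $\Theta_{\alpha,*}$, uniformize over a positive-measure set $E$, and play the integrality of the number of children of $B_n(x_0)$ meeting $E$ against the martingale density theorem at levels where $f_n(x_0)$ is near its liminf. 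That argument is sound (one nitpick: Egorov's theorem does not literally apply since $f_n$ need not converge; what you need is the increasing-union argument on the sets $E_M=\{x:\ a\leqslant f_n(x)\leqslant b\ \forall n\geqslant M\}$, which works).

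The ceiling half, however, has a genuine gap. You evaluate at levels where $f_n(x_0)$ is close to its limsup and assert this means ``near the top $b$ of the band''. That identification is unjustified: $b\approx Ra_0\rho$ is an essential upper bound produced by the liminf-slicing, whereas $\Theta^{\alpha,*}(\mu,x_0)$ is only known to lie in $[a_0,\,Ra_0\rho)$ and may sit at the bottom of the band. Quantitatively, forcing at least $N+1$ children meeting $E$ requires $(1-\eta)Sv/b>N$, i.e.\ essentially $v/b>N/S$; with $v\approx\Theta^{\alpha,*}(\mu,x_0)$ possibly of size $a_0$ and $b\approx Ra_0$, this needs $R<S/N$. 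But for the ceiling bound you may only assume $R$ slightly below $(N+1)/S$, and --- by your own floor bound --- $\mathcal{C}_{loc}(\mu,\alpha)\geqslant S/N$ always, so in the regime $S^2<N(N+1)$ (e.g.\ $m^{\alpha j}=1.2$, where $S/N=1.2$ but $(N+1)/S\approx 1.67$) every admissible $R$ is at least $S/N$ and the count $\geqslant N+1$ cannot be forced. The repair is to mirror the construction, not only the evaluation: slice the range of $\Theta^{\alpha,*}$ instead, getting a layer with $\Theta^{\alpha,*}\in[b_0,b_0\rho)$ and hence $\Theta_{\alpha,*}\geqslant b_0/R$, so the band is $[(1-\delta)b_0/R,\,(1+\delta)b_0\rho]$ and its top is now pinned to each point's limsup. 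At levels with $f_n(x_0)\geqslant(1-\delta')b_0$ you then do get at least $N+1$ children meeting $E$, each of value at least $a$, whence $(N+1)a\leqslant Sv\leqslant Sb$ and $b/a\geqslant(N+1)/S$, contradicting $b/a\approx R<(N+1)/S$. With this fix both halves are correct, and your per-$(i,j)$ argument stands as a valid alternative to the paper's proof.
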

If $m^{\alpha}$ is an integer, Theorem~\ref{mainlower} gives the trivial estimate $\mathcal{C}_{loc}(\mu, \alpha) \geqslant 1$ only, which is already known from~\eqref{basic_lower}. So, we will prove Theorem~\ref{mainlower} under the assumption $m^{\alpha} \notin \NN$.

\begin{theorem}\label{mainupper}
    There is a measure $\mu$ such that
    \begin{equation}\label{mainupper_eq}
        \mathcal{C}(\mu, \alpha) \leqslant \frac{\lceil m^{\alpha} \rceil}{\lfloor m^{\alpha} \rfloor}.
    \end{equation}
\end{theorem}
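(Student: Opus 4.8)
The plan is to build a suitable product measure supported on a subtree of $\TT$ in which each node has either $\lfrf$ or $\lcrc$ children, choosing the branching level by level so that the rescaled masses $m^{\alpha n}\mu(B_n(x))$ stay trapped in a multiplicative window of width exactly $\lcrc/\lfrf$. Throughout I assume $m^{\alpha}\notin\NN$ (otherwise $\lfrf=\lcrc$ and the uniform product measure gives $\mathcal{C}(\mu,\alpha)=1$ directly). Write $p=\lfrf$ and $q=\lcrc=p+1$; since $\alpha\in(0,1)$ and $m\geqslant 2$ we have $1\leqslant p<m^{\alpha}<q\leqslant m$, so both $u:=m^{\alpha}/p>1$ and $v:=m^{\alpha}/q<1$ are admissible multipliers with $u/v=q/p$.

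First I would specify the branching sequence $(k_i)_{i\geqslant 1}\subseteq\{p,q\}$ by a greedy ``thermostat'' recursion on the partial products $P_n:=\prod_{i=1}^{n} m^{\alpha}/k_i$, with $P_0:=1$: if $P_n\leqslant 1$ set $k_{n+1}=p$ (so $P_{n+1}=P_n u$), and if $P_n>1$ set $k_{n+1}=q$ (so $P_{n+1}=P_n v$). The key step is the invariant $P_n\in[v,u]$ for every $n$, proved by induction: if $P_n\in[v,1]$ then $P_{n+1}=P_n u\in[vu,u]\subseteq[v,u]$, and if $P_n\in(1,u]$ then $P_{n+1}=P_n v\in(v,uv]\subseteq[v,u]$, using $u>1>v>0$. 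This is the combinatorial heart of the argument and the main obstacle: one must ensure a single integer branching number per level keeps the products simultaneously bounded above and below, and the two available factors happen to differ by exactly $q/p$, which is why the window has the desired width.

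Next I would define the measure. Let $\nu_i$ be the uniform probability measure on $\{0,1,\dots,k_i-1\}\subseteq\{0,\dots,m-1\}$ and set $\mu=\bigotimes_{i=1}^{\infty}\nu_i$, a Borel probability measure on the compact metric space $\TT$; being finite on a compact metric space it is Radon. Its support is $S=\{x:\ x_i<k_i\ \text{for all }i\}$, which has full measure, and for $x\in S$ the ball $B_n(x)$ is the cylinder of sequences agreeing with $x$ in the first $n$ coordinates, so
\begin{equation*}
m^{\alpha n}\mu(B_n(x))=\frac{m^{\alpha n}}{\prod_{i=1}^{n}\nu_i(\{x_i\})^{-1}}=\prod_{i=1}^{n}\frac{m^{\alpha}}{k_i}=P_n,
\end{equation*}
a quantity independent of $x\in S$ (off $S$ the set is $\mu$-null).

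Finally I would read off the densities. Since $P_n\in[v,u]$ for all $n$, for $\mu$-a.e.\ $x$ the upper density is $\Theta^{\alpha,*}(\mu,x)=\limsup_n P_n\leqslant u$ and the lower density is $\Theta_{\alpha,*}(\mu,x)=\liminf_n P_n\geqslant v$; both are finite and positive, so $\mathcal{C}(\mu,\alpha)$ is well defined. Because these densities are $\mu$-a.e.\ constant, $\esssup_x\Theta^{\alpha,*}(\mu,x)=\limsup_n P_n$ and $\essinf_x\Theta_{\alpha,*}(\mu,x)=\liminf_n P_n$, whence
\begin{equation*}
\mathcal{C}(\mu,\alpha)=\frac{\limsup_n P_n}{\liminf_n P_n}\leqslant\frac{u}{v}=\frac{q}{p}=\frac{\lcrc}{\lfrf},
\end{equation*}
as required. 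The remaining work is routine: verifying that the product measure is well defined and Radon, and checking the elementary inequalities in the thermostat induction.
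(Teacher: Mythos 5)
Your proposal is correct and is essentially the paper's own construction: the product measure $\bigotimes_i \nu_i$ with branching numbers $k_i \in \{\lfrf, \lcrc\}$ is exactly the paper's uniform measure (with $x_0=1$), and your thermostat recursion keeping $P_n \in [m^{\alpha}/\lcrc,\, m^{\alpha}/\lfrf]$ mirrors the paper's inductive choice of $s_n$ keeping $f_n \in [x_0,\, \frac{\lcrc}{\lfrf}x_0]$, both windows having multiplicative width $\lcrc/\lfrf$. The induction and the resulting density bounds check out, so this is a valid proof by the same method.
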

In particular, these estimates do not depend on $m$ but only on $m^{\alpha}$. Informally, we can say that the estimates are dimension-free.

\section{Proof of Theorem~\ref{mainlower}}

\begin{defn}\label{defn1}
 Let $\mu$ be a measure. Define the sequence of functions $\{f_n\}_{n=0}^{\infty} $ by the rule
    \begin{equation*}
f_n(x) = m^{\alpha n} \mu(B_n(x)).
    \end{equation*}
\end{defn}
It follows from the definition that the functions $f_n$ are locally constant. Consequentely, they are continious on the space $\TT$ and measurable with respect to the measure $\mu$.

Since we can pass from $m$-adic intervals to $m^d$-adic, it is sufficient to prove the inequality
\begin{equation}
    \mathcal{C}_{loc}(\mu, \alpha) \geqslant \max \left( \frac{m^{\alpha }}{\lfloor m^{\alpha } \rfloor}, \frac{\lceil m^{\alpha } \rceil}{m^{\alpha }}\right).
\end{equation}

\begin{lemma}\label{dirichlet}
    Let $w, u > 1$ be real numbers such that $w$ is not integer. Then for every $\delta > 0$ there exists $\tau>0$ such that for any integer $n \in [1, u)$ from any positive real numbers $z_1, \dots, z_n$ with sum $w$ there can be chosen one that lies in the set $\left [ \tau, \frac{w}{\lceil w \rceil} \right ] \cup \left[\frac{w}{\lfloor w \rfloor}-\delta, w\right)$.
\end{lemma}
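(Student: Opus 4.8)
The plan is to argue by contradiction using a Dirichlet-type pigeonhole count on how many of the $z_i$ can be ``large.'' Write $k = \lfloor w \rfloor$, so that $\lceil w \rceil = k+1$ since $w$ is not an integer, and the target set is
\[
S = \left[\tau,\ \tfrac{w}{k+1}\right] \cup \left[\tfrac{w}{k} - \delta,\ w\right).
\]
First I would note that the claim only gets easier as $\delta$ grows, because for a fixed $\tau$ enlarging $\delta$ enlarges $S$; hence a $\tau$ working for a small $\delta_0$ works for all $\delta \ge \delta_0$, and it suffices to treat small $\delta$, say $0 < \delta < \tfrac{w}{k(k+1)}$. This guarantees $\tfrac{w}{k+1} < \tfrac{w}{k} - \delta$, so that the complement of $S$ inside $(0,w)$ is the disjoint union $(0,\tau) \cup \left(\tfrac{w}{k+1},\ \tfrac{w}{k}-\delta\right)$. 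Assuming for contradiction that no $z_i$ lies in $S$, each $z_i$ is then either \emph{small} ($z_i < \tau$) or \emph{medium} ($\tfrac{w}{k+1} < z_i < \tfrac{w}{k} - \delta$); here I use $z_i < w$, which holds for every $i$ once $n \ge 2$.

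The core is a counting estimate. Let $a$ and $b$ denote the numbers of small and medium terms, so $a + b = n < u$. Since each medium term exceeds $\tfrac{w}{k+1}$ and all terms are positive with total $w$, the pigeonhole bound $b\,\tfrac{w}{k+1} < w$ forces $b \le k$. Bounding the two groups separately and using $a \le n < u$,
\[
w = \sum_{i=1}^n z_i < a\tau + b\left(\tfrac{w}{k} - \delta\right) \le u\tau + w\,\tfrac{b}{k} - b\delta,
\]
so that $u\tau > w\,\tfrac{k-b}{k} + b\delta$. I would then split on $b$: if $b \le k-1$ the right side is at least $\tfrac{w}{k}$, forcing $u\tau > \tfrac{w}{k}$; if $b = k$ the right side equals $k\delta$, forcing $u\tau > k\delta$. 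Choosing
\[
0 < \tau \le \min\left\{\tfrac{w}{ku},\ \tfrac{k\delta}{u},\ \tfrac{w}{k+1}\right\}
\]
contradicts both possibilities at once (the third entry merely ensures $\left[\tau,\tfrac{w}{k+1}\right]$ is a legitimate interval and that ``small'' and ``medium'' do not overlap). Hence some $z_i \in S$.

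The delicate point will be the simultaneous calibration of $\tau$ against $\delta$, $u$, $k$, and $w$: the regimes $b=k$ and $b<k$ pull in opposite directions, and the displayed minimum is exactly what closes both. I would also emphasize where the hypotheses enter. The upper bound $u$ on $n$ is essential, since otherwise an adversary could take $n$ huge with every $z_i = w/n < \tau$, all small and none in $S$, defeating any fixed $\tau$; it is precisely this that the pigeonhole bound $b \le k$ cannot by itself prevent. The non-integrality of $w$ is what opens the genuine gap between $\tfrac{w}{k+1}$ and $\tfrac{w}{k}$ in which medium terms live and are counted. The only boundary subtlety is $n=1$, where $z_1 = w$ sits exactly at the right endpoint of $S$; this degenerate case is harmless for the intended application and would be absorbed directly were the endpoint $w$ included.
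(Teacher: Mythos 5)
Correct, and essentially the paper's own argument: both proofs assume for contradiction that every $z_i$ lies in $(0,\tau)\cup\left(\frac{w}{\lceil w\rceil},\frac{w}{\lfloor w\rfloor}-\delta\right)$, bound the number of ``medium'' terms by $\lfloor w\rfloor$ via the pigeonhole principle, and then extract a lower bound on $\tau$ that a suitable choice of $\tau$ contradicts (the paper takes $\tau<\lfloor w\rfloor\delta/u$; your three-part minimum with the case split on $b$ is the same computation rearranged). Your closing remark about $n=1$ is apt: there $z_1=w$ indeed escapes the half-open target set, a boundary defect of the statement as written that the paper's proof silently shares, so your treatment is if anything slightly more careful.
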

\begin{proof}
    Assume all the numbers $z_1, \dots, z_{n}$ lie in the set $\left(0, \tau \right) \cup \left(\frac{w}{\lceil w \rceil}, \frac{w}{\lfloor w \rfloor}-\delta\right)$. Let $J$ be the set of indices $j$ for which $z_j \in \left(\frac{w}{\lceil w \rceil}, \frac{w}{\lfloor w \rfloor}-\delta\right)$. The inequality $w \geqslant \sum_{j \in J} z_j > |J| \cdot \frac{w}{\lceil w \rceil}$ imply $|J| < \lceil w \rceil$. Then $|J| \leqslant \lfloor w \rfloor$ and $\sum_{j \in J} z_j \leqslant w-\lfloor w \rfloor \delta$. So, $\sum_{j \notin J} z_j \geqslant \lfloor w \rfloor \delta $. According to the pigeonhole principle, there is a number among them that is not less than $\frac{\lfloor w \rfloor \delta}{n}$. Consequentely, $\tau \geqslant \frac{\lfloor w \rfloor \delta}{n} \geqslant \frac{\lfloor w \rfloor \delta}{u}$.
\end{proof}

\begin{lemma}\label{prop_of_big_intervals}
    Let $\mu$ be a measure, let $\Gamma_1, \Gamma_2,  \dots$ be a sequence of sets such that the set $\Gamma_n$ is the union of some intervals of level $n$ --- one calls them "\textit{marked intervals}". Suppose that there exists a positive number $K$ such that for any positive integer $n$ and any interval $I_k^n$ of level $n$ the following inequality holds: 
    \begin{equation}\label{lemma}
        \mu(I^n_k \cap \Gamma_{n+1}) \geqslant K \cdot \mu(I^n_k).
    \end{equation} 
    Then for $\mu$-a.e. point $x \in \TT$ the sequence of the balls $\{B_n(x)\}_{n=0}^{\infty}$ contains arbitrarily long sequences of consecutive marked balls.
\end{lemma}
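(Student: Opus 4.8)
The plan is to argue in two stages: a local multiplicative estimate, proved by induction on the run length, and a Borel--Cantelli--type globalization over disjoint blocks of levels. First I would record the combinatorial meaning of the hypothesis. Since the intervals of level $n$ partition $\TT$ and $\Gamma_n$ is a union of some of them, the ball $B_n(x)$ is marked if and only if $x \in \Gamma_n$; hence saying that $\{B_n(x)\}$ contains a run of $L$ consecutive marked balls is the same as saying $x \in \Gamma_n \cap \Gamma_{n+1} \cap \cdots \cap \Gamma_{n+L-1}$ for some $n$. Note also that $\mu(I \cap \Gamma_{n+1}) \leqslant \mu(I)$ forces $K \leqslant 1$.

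The key step is the following claim: for every $L \geqslant 1$, every $n \geqslant 1$, and every interval $I$ of level $n$,
\begin{equation*}
\mu\big(I \cap \Gamma_{n+1} \cap \cdots \cap \Gamma_{n+L}\big) \geqslant K^{L}\, \mu(I).
\end{equation*}
I would prove this by induction on $L$; the base case $L=1$ is exactly~\eqref{lemma}. For the step, write $I \cap \Gamma_{n+1}$ as the disjoint union of the marked intervals $J_1, \dots, J_s$ of level $n+1$ contained in $I$. Intersecting with $\Gamma_{n+2} \cap \cdots \cap \Gamma_{n+L}$ and using disjointness,
\begin{equation*}
\mu\big(I \cap \Gamma_{n+1} \cap \cdots \cap \Gamma_{n+L}\big) = \sum_{i=1}^{s} \mu\big(J_i \cap \Gamma_{n+2} \cap \cdots \cap \Gamma_{n+L}\big) \geqslant K^{L-1} \sum_{i=1}^{s}\mu(J_i) = K^{L-1}\,\mu(I \cap \Gamma_{n+1}) \geqslant K^{L}\mu(I),
\end{equation*}
where the middle inequality is the inductive hypothesis applied to each $J_i$ and the last is~\eqref{lemma}. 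Thus a fixed positive $\mu$-fraction of every interval extends to a full run of length $L$.

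To globalize, fix $L$ and group the levels into consecutive disjoint blocks of length $L$, say the $k$-th block consisting of levels $kL+1, \dots, (k+1)L$ for $k \geqslant 1$ (starting past level $L$ so that the claim, which needs $n \geqslant 1$, always applies). Let $C_N$ be the set of $x$ for which none of the first $N$ blocks is entirely marked. Because whether blocks $1, \dots, N-1$ are entirely marked is determined by the coordinates up to level $NL$, the set $C_{N-1}$ is a union of intervals of level $NL$. Writing $C_{N-1}$ as the disjoint union of such intervals $I$ and applying the claim (with run length $L$) to each, the portion of $I$ on which the $N$-th block fails to be entirely marked has measure at most $(1 - K^L)\mu(I)$; summing gives $\mu(C_N) \leqslant (1 - K^L)\mu(C_{N-1})$, whence $\mu(C_N) \leqslant (1-K^L)^N \mu(\TT)$. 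Since $0 < K^L \leqslant 1$, the right-hand side tends to $0$, so $\mu\big(\bigcap_N C_N\big) = 0$; that is, $\mu$-a.e.\ $x$ lies in some entirely marked block, hence has $L$ consecutive marked balls. Intersecting these full-measure sets over all $L \in \NN$ yields the statement.

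The genuinely new idea is the multiplicative compounding in the claim: the uniform one-step bound~\eqref{lemma} propagates to the bound $K^L$ over $L$ steps. The globalization is then a standard iterated-conditioning argument, and the point requiring the most care there is to align the blocks to level boundaries and keep them disjoint, so that $C_{N-1}$ is \emph{exactly} a union of intervals of level $NL$; this is what legitimizes the product estimate $\mu(C_N) \leqslant (1-K^L)\mu(C_{N-1})$. One should also keep all block boundaries at levels $\geqslant 1$ to remain within the hypothesis, and observe that intervals of $\mu$-measure zero contribute nothing and cause no difficulty.
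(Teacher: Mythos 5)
Your proof is correct and follows essentially the same route as the paper: the multiplicative bound $\mu(I \cap \Gamma_{n+1} \cap \cdots \cap \Gamma_{n+L}) \geqslant K^L \mu(I)$, the grouping of levels into blocks of length $L$ (the paper phrases this as replacing $m$-adic intervals by $m^d$-adic ones to reduce to $d=1$), and the geometric decay $\mu(C_N) \leqslant (1-K^L)^N \mu(\TT)$. The only difference is presentational: you prove by induction the compounding step that the paper merely asserts, and you spell out the block alignment more carefully.
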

\begin{proof}
    It is sufficient to prove that for any positive integer $d$ the set of points $x \in \TT$ such that the sequence of the balls $\{B_n(x)\}_{n=0}^{\infty}$ does not contain a sequence of $d$ consecutive marked balls, has zero $\mu$-measure.
    
     It follows from the statement of the lemma, that for each interval $I_k^n$ the measure $\mu$ of the set of the points $x\in I^n_k$ such that the balls $B_{n+1}(x), \dots , B_{n+d}(x)$ are marked, is not less than $K^d  \cdot \mu(I^n_k)$. So $m$-adic intervals may be replaced by $m^d$-adic intervals and we can consider only the case $d=1$.

    One may see that the set under consideration is precisely equal to $\TT \backslash \left ( \bigcup_{n=1}^{\infty} \Gamma_n \right )$. For every positive integer $n'$ the set $\TT \backslash \left ( \bigcup_{n=1}^{n'-1} \Gamma_n \right )$ is a union of some intervals of level $n'$, and therefore after direct application of~\eqref{lemma} we get that
    \begin{equation}
        \mu \left ( \TT \backslash \left ( \bigcup_{n=1}^{n'} \Gamma_n \right ) \right ) \leqslant (1-K) \cdot \mu \left (\TT \backslash \left ( \bigcup_{n=1}^{n'-1} \Gamma_n \right ) \right ).
    \end{equation}
    Consequently, 
    \begin{equation}
        \mu \left ( \TT \backslash \left ( \bigcup_{n=1}^{n'} \Gamma_n \right ) \right ) \leqslant (1-K)^{n'} \mu(\TT).
    \end{equation} 
    Choosing sufficiently large $n'$, we obtain the required statement.
\end{proof}
\begin{proof}[Proof of Theorem~\ref{mainlower}]
    At first, we fix a real number $\delta>0$ such that  $\frac{m^{\alpha}}{\lfrf} - \delta > 1$.
Denote some interval of level $n$ that intersects the support of $\mu$ by $I^n_k$. Consider all its subintervals of level $n+1$ that also intersect the support of $\mu$, denote them by $I^{n+1}_{m(k-1)+{i_j}}$. Points $x_{i_j}$ are chosen from the intersection of intervals $I^{n+1}_{m(k-1)+{i_j}}$ respectively and the support of $\mu$.
By the definition of the $f_n$,
\begin{equation}
    m^{\alpha} = \frac{f_{n+1}(x_{i_1})}{f_n(x_{i_1})}+\dots+\frac{f_{n+1}(x_{i_s})}{f_n(x_{i_s})}.
\end{equation}
After application of Lemma~\ref{dirichlet} for constants $w=m^{\alpha}$, $u=m$, $z_j = \frac{f_{n+1}(x_{i_j})}{f_n(x_{i_j})}$ and fixed in the beginning of the proof $\delta$, one gets that for some $j \in \{1, \dots, s\}$ the following statement holds:
\begin{equation}\label{basic}
    \frac{f_{n+1}(x_{i_j})}{f_n(x_{i_j})} \in \left [ \tau, \frac{m^{\alpha}}{\lcrc} \right ] \cup \left[ \frac{m^{\alpha}}{\lfrf}-\delta, m^{\alpha}\right).
\end{equation}
We call the subinterval $I^{n+1}_{m(k-1)+i_j}$ a \textit{little marked interval} if $\frac{f_{n+1}(x_{i_j})}{f_n(x_{i_j})} \in \left [ \tau, \frac{m^{\alpha}}{\lcrc} \right ]$. Similarly, if $\frac{f_{n+1}(x_{i_j})}{f_n(x_{i_j})} \in \left[ \frac{m^{\alpha}}{\lfrf}-\delta, m^{\alpha}\right)$, the subinterval is called a \textit{big marked interval}.

Let us apply Lemma~\ref{prop_of_big_intervals} for the number $K=\tau m^{-\alpha}$ and conclude that for $\mu$-a.e. points $x \in \TT$ in the sequence $\left \{ \frac{f_{n+1}(x)}{f_n(x)}\right \}_{n=0}^{\infty}$ there are arbitrarily long sequences of consecutive marked intervals. If for sufficiently large $n$ there are only either big marked intervals or little marked intervals in the sequence $\{B_n(x)\}_{n=0}^{\infty}$ then for any positive integer $d$ either the estimate $\mathcal{C}_{loc}(\mu, \alpha) \geqslant \left(\frac{m^{\alpha}}{\lfrf} - \delta  \right)^d$, or the estimate $\mathcal{C}_{loc}(\mu, \alpha) \geqslant \left(\frac{\lcrc}{m^{\alpha}} \right)^d$ holds, which is impossible if $d$ is sufficiently large. 
Hence, we conclude that there are infinitely many both big intervals and little intervals, and consequently 
\begin{equation}
    \mathcal{C}_{loc}(\mu, \alpha) \geqslant \max \left(\frac{m^{\alpha}}{\lfrf} - \delta, \frac{\lcrc}{m^{\alpha}}  \right).
\end{equation}
The choice of arbitrarily small $\delta$ finished the proof of Theorem~\ref{mainlower}.
\end{proof}

\section{Proof of Theorem~\ref{mainupper}}
Recall that a measure $\mu$ is called \textit{uniform} if for arbitrary points $x, y$ that belong to the support of $\mu$ and an arbitrary real number $r>0$ the equality $\mu \left(B(x, r)\right) = \mu \left(B(y, r)\right)$ holds. Moreover, if there are positive constants $c$, $n$ such that $\mu \left(B(x, r)\right) = c r^n$, then a measure $\mu$ is called \textit{$n$-uniform}. There are many open problems in the theory of uniform measures in $\RR^d$. Marstrand proved in~\cite{Marstrand} that there is no $\alpha$-uniform measure for non-integer $\alpha$. Preiss proved in~\cite{Preiss} that for $n=1$ and $n=2$ every $n$-uniform measure $\mu$ is flat, i.e. there are a positive constant $c$ and an $n$-plane $L$ such that $\mu = c \mathcal{H}^n \llcorner L$. He also showed that for the cone $C = \{(x_1, x_2, x_3, x_4) \in \RR^4 \mid x_4^2=x_1^2+x_2^2+x_3^2\}$ the measure $\mathcal{H}^3 \llcorner C$ is $3$-uniform. Moreover, Kowalski and Preiss proved in~\cite{KoP} that every $(d-1)$-uniform measure in $\RR^{d}$ is either flat measure or, up to isometry, the measure $c\mathcal{H}^{d-1} \llcorner (C \times \RR^{d-4})$. In the case of $2 < n < d-1$ there was no classification of $n$-uniform measures, until A. Dali Nimer in~\cite{Nimer} classified $3$-uniform measures in $\RR^5$ and constructed new examples of $3$-uniform measures in arbitary codimension.

In the case of $\TT$ all uniform measures can be described explicitly.
\begin{lemma}\label{uniform}
    Let $\mu$ be a uniform measure and let $I$ be an arbitary interval of level $n$ that intersects the support of $\mu$. Denote by $s_I$ the number of subintervals of $I$ of level $n+1$ that intersect the support of $\mu$. Then, $s_I$ depends only on $n$, not on the choice of $I$. We will denote this value by $s_n$. 
\end{lemma}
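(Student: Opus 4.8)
The plan is to convert the combinatorial count $s_I$ into a ratio of measures that, by uniformity, cannot depend on the particular interval. The crucial observation is that by Remark~\ref{seq} every interval $I$ of level $n$ coincides with the ball $B_n(y)=B(y,m^{-n})$ for each of its points $y$; indeed, $d(x,y)\leqslant m^{-n}$ is equivalent to $n(x,y)\geqslant n+1$, i.e.\ to $x$ and $y$ agreeing in the first $n$ positions. Thus, whenever $I$ meets the support of $\mu$, I may choose $y\in I\cap\operatorname{supp}\mu$ and write $I=B(y,m^{-n})$ as a ball centred at a support point, which is exactly the situation in which the uniformity hypothesis applies.

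First I would show that all level-$n$ intervals meeting the support share one common measure. Given two such intervals $I,I'$, pick $x\in I\cap\operatorname{supp}\mu$ and $y\in I'\cap\operatorname{supp}\mu$; then $I=B(x,m^{-n})$ and $I'=B(y,m^{-n})$, so uniformity with $r=m^{-n}$ gives $\mu(I)=\mu(I')$. Denote this common value by $\mu_n$. Since the intervals are clopen in the ultrametric space $(\TT,d)$, an interval meets the support precisely when it has positive measure, so in particular $\mu_n>0$; this positivity is what will let me divide later and also what lets me discard the non-support subintervals as null sets.

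Next I would compare consecutive levels. A level-$n$ interval $I$ is partitioned into its $m$ subintervals of level $n+1$, one for each choice of the $(n+1)$-th coordinate. Of these, the ones not meeting the support have measure zero, while each of the $s_I$ subintervals that do meet the support has measure $\mu_{n+1}$ by the previous step. Summing over the partition yields $\mu_n=s_I\cdot\mu_{n+1}$, hence $s_I=\mu_n/\mu_{n+1}$.

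Because the right-hand side depends only on the level $n$ and not on the chosen interval $I$, this establishes that $s_I$ is a function of $n$ alone, which is the assertion. I do not anticipate a genuine obstacle here: the proof is short, and the only points needing care are the identification of each support-meeting interval with a ball centred at one of its support points, so that the uniformity hypothesis is actually applicable, and the remark that meeting the support is equivalent to having positive measure for these clopen intervals, which both legitimises the division and justifies ignoring the subintervals disjoint from the support.
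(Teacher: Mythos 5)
Your proof is correct and takes essentially the same route as the paper's: both use uniformity to assign a common measure $u_n$ to every level-$n$ interval meeting the support (an interval being a ball centred at any of its support points), then decompose such an interval into its level-$(n+1)$ subintervals to get $u_n = s_I\, u_{n+1}$, which forces $s_I$ to depend only on $n$. The only difference is that you spell out details the paper leaves implicit, namely the clopenness argument showing that subintervals missing the support are null and that $u_{n+1}>0$, so the division is legitimate.
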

\begin{proof}
    Denote the measure of any interval of level $n$ intersecting the support of $\mu$ by $u_n$. Now count a measure of an arbitrary interval of level $n$ that intersects the support of $\mu$: $u_n = s_n u_{n+1}$. Consequentely, $s_n$ depends only on the level of the interval.
\end{proof}

Lemma~\ref{uniform} establishes a bijection between uniform measures (up to shuffling the intervals) and pairs consisting of a positive real number $x_0$ and a sequence of positive integers $(s_0, s_1, s_2, \dots)$. Indeed, if there is a uniform measure $\mu$, set $x_0 = \mu(\mathbb{T})$, and the sequence $(s_0, s_1, s_2, \dots)$ appears from the previous lemma.

Assume that the number $x_0$ and the sequence $(s_0, s_1, s_2, \dots)$ are fixed. Let us describe the measure $\mu$ required for the proof of Theorem~\ref{mainupper}. The support of $\mu$ is the set of sequences $(a_1, a_2, \dots)$ such that $a_i \in \{0, 1, ..., s_i - 1\}$. Define the measure of an interval of level $n$ by the following rule: its measure is equal to $\frac{x_0}{s_0  \dots  s_{n-1}}$, if the interval intersects the support of $\mu$, and equals to $0$, if it does not intersect.

\begin{proof}[Proof of Theorem~\ref{mainupper}]
    In the example measure each $s_i$ is either equal to $\lfrf$ or to $\lcrc$. Then the transition from $f_n$ to $f_{n+1}$ is either multiplication by $\frac{m^{\alpha}}{\lfrf}$ or division by $\frac{\lcrc}{m^{\alpha}}$. Define $s_n$ inductively by the following rule:
    \begin{equation}
        s_{n} = \begin{cases}
            \lcrc,  & \mbox{if } f_{n-1} \frac{m^{\alpha}}{\lfrf} > \frac{\lcrc}{\lfrf}x_0;\\
            \lfrf, & \mbox{if } f_{n-1} \frac{m^{\alpha}}{\lfrf} \leqslant \frac{\lcrc}{\lfrf}x_0.
          \end{cases}
    \end{equation}
     Note that the first case is chosen if and only if $f_{n-1} > \frac{\lcrc}{m^{\alpha}}x_0$, and therefore after one step $f_n = f_{n-1} \cdot \frac{m^{\alpha}}{\lcrc} > x_0$. Consequently,
    \begin{equation}
        \begin{aligned}
            \Theta^{\alpha, *}(\mu, x) = \limsup_{n \ra \infty} f_n(x) \leqslant \frac{\lcrc}{\lfrf}x_0; \\ 
            \Theta_{\alpha, *}(\mu, x) = \liminf_{n \ra \infty} f_n(x) \geqslant x_0,
        \end{aligned}
    \end{equation}
    and the inequality~\eqref{mainupper_eq} is true.
\end{proof}

\end{document}